\documentclass[a4paper.12pt]{article}

\usepackage[top=2cm, bottom=2cm, left=2cm, right=2cm]{geometry}

\usepackage[active]{srcltx}

\usepackage[latin1]{inputenc}
\usepackage{graphicx}
\usepackage{latexsym, amsmath, amsthm, amsfonts, amssymb, txfonts, pxfonts, wasysym}
\usepackage{latexsym}
\usepackage{algorithm}
\usepackage{caption}
\usepackage[dvips]{color}
\usepackage{xy}
\usepackage{wrapfig}
\usepackage{dsfont}
\input xy
\xyoption{all}


\theoremstyle{remark}

\newtheorem*{proofpar}{\hspace{7 mm}Proof}
\newtheorem{definition}[subsection]{\hspace{7 mm}Definition}

\theoremstyle{theorem}

\newtheorem{theorem}[subsection]{\hspace{7 mm}Theorem}

\newtheorem{corollary}[subsection]{\hspace{7 mm}Corollary}
\newtheorem{lemma}[subsection]{\hspace{7 mm}Lemma}

\renewcommand\proofname[1]{\hspace{7 mm}Proof}

\captionsetup[figure]{labelformat=empty,labelsep=none}

\title{A note on the combinatorial structure of finite and locally finite simplicial complexes of nonpositive curvature}

\author{
Djordje Barali\'{c}\\
Mathematical Institute, Serbian Academy of Sciences and Arts,\\
Kneza Mihaila $36$, p.p. $367, 11001$ Belgrade, Serbia\\
E-mail address: djbaralic@mi.sanu.ac.rs\\
\\
Ioana-Claudia Laz\u{a}r\\
'Politehnica' University of Timi\c{s}oara, Dept. of Mathematics,\\
Victoriei Square $2$, $300006$-Timi\c{s}oara, Romania\\
E-mail address: ioana.lazar@mat.upt.ro}

\date{}

\usepackage{pslatex}

\linespread{1.5}

\begin{document}

\maketitle

\begin{abstract}

We investigate the collapsibility of systolic finite simplicial complexes of arbitrary dimension.  The main tool we use in the proof is discrete Morse theory. We shall consider a convex subcomplex of the complex and project any simplex of the complex onto a ball around this convex subcomplex. These projections will induce a convenient gradient matching on the complex.
Besides we analyze the combinatorial structure of both CAT(0) and systolic locally finite simplicial complexes of arbitrary dimensions. We will show that both such complexes possess an arborescent structure. Along the way we make use of certain well known results regarding systolic geometry.

\hspace{0 mm} \textbf{2010 Mathematics Subject Classification}:
05C99, 05C75.

\hspace{0 mm} \textbf{Keywords}: standard piecewise Euclidean metric, CAT(0) metric, local $6$-largeness, convex subcomplex, projection ray, directed geodesic, discrete vector field, gradient path, Morse matching, collapsibility,
arborescent structure.
\end{abstract}

\section*{Introduction}

In this paper we show that finite simplicial complexes satisfying a combinatorial curvature condition can be simplicially collapsed to a point, while nonpositively curved locally finite simplicial complexes have a rather simple combinatorial structure.

Curvature can be expressed both in metric and combinatorial terms. One can either refer to 'nonpositively curved' in the sense of Aleksandrov and Gromov, i.e. by comparing small triangles in the space with triangles in the Euclidean plane. Such triangles must satisfy the CAT(0) inequality (see \cite{bridson_1999} or \cite{gromov_1987}). Or, else, one can express curvature combinatorially using a condition, called local $6$-largeness, that seems a good analogue of metric nonpositive curvature (see \cite{janusz_2006}). A simplicial complex is locally $6$-large if every cycle consisting of less than $6$ edges in any of its links has some two consecutive edges contained in a $2$-simplex of this link. We call a simplicial complex systolic if it is locally $6$-large, connected and simply connected. In dimension $2$, local $6$-largeness is called the $6$-property of the simplicial complex (see \cite{corson_1998}). In dimension $2$ the two curvature conditions are equivalent for the standard piecewise Euclidean metric. In higher dimensions, however, the equivalence no longer holds (see \cite{janusz_2006}, chapter $14$ for examples). Still, one implication remains true on simplicial complexes
of dimension greater than $2$. Namely, there exists a constant $k(n) \geq 6$ such that
the standard
piecewise Euclidean metric on a $k(n)$-systolic, $n$-dimensional
simplicial complex whose simplices represent only finitely many isometry classes, is
CAT($0$) (see \cite{janusz_2006} (chapter $14$, page
$52$).

It turns out that systolic complexes have many properties similar to CAT(0) spaces. In dimension
$2$, for instance, both CAT(0) and systolic simplicial complexes (not necessarily endowed with the
standard piecewise Euclidean metric), collapse to a point (see
\cite{lazar_2010_8}, chapter $3.1$, page $36 - 48$; \cite{corson_1998}).
Besides, the weaker condition of
contractibility does characterize both nonpositively curved spaces and systolic complexes. The distance function from any point in a CAT(0) space has a unique local minimum on each convex subset of the space. Similarly, in a systolic complex the projection of any simplex in the $1$-ball around a convex subcomplex onto the subcomplex, is a single simplex.

Crowley showed in \cite{crowley_2008}, using discrete Morse theory (see
\cite{forman_1998}), that CAT(0)
standard piecewise Euclidean
complexes of dimension $3$ or less satisfying the property
that any $2$-simplex is a face of at most two $3$-simplices in the
complex, simplicially collapse to a point.
It is shown further in \cite{lazar_2012}, using again discrete Morse theory, that in dimension $2$
and $3$, under the same technical condition, systolic standard piecewise Euclidean complexes
enjoy the same property. The novelty of the result in \cite{lazar_2012} is that, in dimension $3$,
the hypothesis is no longer of metric nature.
The result in \cite{lazar_2012} in dimension $2$, however, is equivalent to Crowley's. This happens because the CAT(0)
$2$-complex in Crowley's paper is constructed by endowing it with the
standard piecewise Euclidian metric and requiring that each of its interior
vertices has degree at least $6$. The standard piecewise Euclidian
metric on the $2$-complex becomes then CAT(0).

Adiprasito and Benedetti extended Crowley's result to all dimensions (see \cite{benedetti_2012}, Theorem $3.2.1$). Namely, they proved using discrete Morse theory that every complex that is CAT(0) with a metric for which all vertex stars are convex, is collapsible. Compared to Crowley's paper, the techniques used in \cite{benedetti_2012} when applying discrete Morse theory are more advanced.

Given the similarities between the CAT(0) and systolic worlds,
one may naturally ask whether a result similar to the one in \cite{benedetti_2012} also holds on systolic complexes. An affirmative answer to this question was already given in \cite{chepoi_2009} and \cite{lazar_2013}. One aim of the paper is to give a different proof of the same result by applying discrete Morse theory.
Compared to the proof given by Adiprasito and Benedetti on CAT(0) complexes, however, the collapsibility of systolic complexes can be shown much easier. The key point is to define conveniently a so called gradient matching on the complex. Due to certain results regarding systolic geometry, this gradient matching will turn out to be a Morse matching. One of the properties of systolic complexes we shall refer to is: any two vertices in a systolic complex can be joined by a unique directed geodesic.

Knowing that systolic and CAT(0) finite simplicial complexes are both collapsible, one may naturally ask what happens in the infinite case. The equivalent notion of collapsibility is called arborescent structure.
A single result regarding the arborescent structure of simplicial complexes is known to us. Namely, in \cite{corson_1998} it is shown that any locally finite, simply connected  simplicial $2$-complex with the $6$-property is a monotone union of a sequence of collapsible subcomplexes. We show that the same holds in all dimensions. Besides, we give an analogue result on locally finite CAT(0) simplicial complexes. The results in the infinite case are consequences of the fact that both systolic and CAT(0) finite simplicial complexes endowed with standard piecewise Euclidean metrics are collapsible.

\textbf{Acknowledgements.} The first author was supported by the
Grant 174020 of the Ministry for Education and Science of the
Republic of Serbia. The second author is grateful to Professor
Louis Funar for advice and thanks the Georg-August University in
G\"ottingen for its hospitality during the summer of $2013$. The
visit has been supported by the DAAD.

\section{Preliminaries}

\subsection{CAT(0) spaces}

Let $(X,d)$ be a metric space.
A \emph{geodesic path} joining $x \in
X$ to $y \in X$ is a path $c : [a,b] \to X$ such that $c(a) = x$,
$c(b) = y$ and $d(c(t), c(t')) = |t - t'|$ for all $t, t' \in
[a,b]$. The image $\alpha$ of $c$ is called a \emph{geodesic
segment} with endpoints $x$ and $y$.

A \emph{geodesic metric space} $(X,d)$ is a metric space in which every pair of points
can be joined by a geodesic segment. We denote any geodesic segment from a point $x$ to a point $y$ in $X$,
by $[x,y]$.

A \emph{geodesic triangle} in $X$ consists of three points $p,q,r
\in X$, called \emph{vertices}, and a choice of three geodesic
segments $[p,q], [q,r], [r,p]$ joining them, called \emph{sides}.
Such a geodesic triangle is denoted by $\triangle ([p,q], [q,r],
[r,p])$ or $\triangle (p,q,r)$. If a point $x \in X$ lies in the
union of $[p,q], [q,r]$ and $[r,p]$, then we write $x \in
\triangle$. A triangle $\overline{\triangle} =
\triangle(\overline{p},\overline{p},\overline{r})$ in
$\mathds{R}^{2}$ is called a \emph{comparison triangle} for
$\triangle = \triangle (p,q,r)$ if $d(p,q) =
d_{\mathds{R}^{2}}(\overline{p},\overline{q})$, $d(q,r) =
d_{\mathds{R}^{2}}(\overline{q},\overline{r})$ and $d(r,p) =
d_{\mathds{R}^{2}}(\overline{r},\overline{p})$. A point
$\overline{x} \in [\overline{q},\overline{r}]$ is called a
\emph{comparison point} for $x \in [q,r]$ if $d(q,x) =
d_{\mathds{R}^{2}}(\overline{q},\overline{x})$.

Let $\triangle (p,q,r)$ be a geodesic
triangle in $X$. Let $\overline{\triangle}
(\overline{p},\overline{q},\overline{r}) \subset \mathds{R}^{2}$ be
a comparison triangle for $\triangle$. The metric $d$ is
\emph{CAT(0)} if for all $x,y \in \triangle$ and all comparison
points $\overline{x}, \overline{y} \in \overline{\triangle}$, the
CAT(0) inequality holds:
\begin{center}
$d(x,y) \leq d_{\mathds{R}^{2}}(\overline{x}, \overline{y})$.
\end{center}
We call $X$ a \emph{CAT(0) space} if it is a geodesic metric
space all of whose geodesic triangles satisfy the CAT(0) inequality.

We say $X$ is \emph{of curvature $\leq 0$} (or
\emph{nonpositively curved}) if it is locally a CAT(0) space, i.e.
for every $x \in X$ there exists $r_{x} > 0$ such that the ball
$B(x, r_{x}) = \{ y \in X | d(x,y) \leq r_{x} \}$, endowed with the induced metric, is a CAT(0) space.

Any complete connected metric space that is simply connected and of curvature $\leq 0$, is a CAT(0) space (for the proof see \cite{bridson_1999}, chapter II.$4$, page $194$).

The balls in a CAT(0) space are contractible; in particular they are simply
connected (for the proof see \cite{bridson_1999}, chapter II.$1$, page $160$).

Let $X$ be a connected CAT(0) space and let $x$ be a point of $X$. The function distance from $x$ has a unique local minimum on each convex subset of $X$ (see \cite{bridson_1999}, Proposition $2.4$, page $176$).

\subsection{Systolic simplicial complexes}

Let $K$ be a simplicial complex.
The underlying space $|K|$ of $K$ is the union of its simplices as topological space. We denote by $K^{(k)}$ the $k$-skeleton of $K, 0 \leq k < \dim K$.

We call $K$ {\it locally finite} if for any simplex $\sigma$ of $K$ and for any point $x \in \sigma$ whereas $\sigma$ is a simplex of $K$, some neighborhood of $x$
meets only finitely many simplices $\tau$ in $K$.

Let $K$ be a finite, connected simplicial complex endowed with the standard
piecewise Euclidean metric. We define the {\it standard piecewise Euclidean metric}
on $|K|$ by taking the distance between any two points $x,y$ in $|K|$ to be the
infimum over all paths in $|K|$ from $x$ to $y$. Each simplex of $K$ is isometric with a regular Euclidean simplex of the same dimension with side lengths equal $1$.

All simplicial complexes in this paper are finite dimensional.
The topology of any simplicial complex in this paper is induced by the standard piecewise Euclidean metric.

The \emph{join} $\sigma \ast \tau$ of two simplices $\sigma$ and $\tau$ of $K$ is a simplex whose vertices are the vertices of $\sigma$ plus the vertices of $\tau$.
The \emph{link} of $K$ at $\sigma$, denoted $\rm {Lk}(\sigma, K)$, is
the subcomplex of $K$ consisting of all simplices of $K$ which are disjoint
from $\sigma$ and which, together with $\sigma$, span a simplex of
$K$. The (closed) \emph{star} of $\sigma$ in $K$, denoted
$\rm {St}(\sigma, K)$, is the union of all simplices of $K$ that contain
$\sigma$. The star $\rm {St}(\sigma, K)$ is the join of $\sigma$
and the link $\rm {Lk}(\sigma, K)$. A subcomplex $L$ in
$K$ is called \emph{full} (in $K$) if any simplex of $K$ spanned by a set of
vertices in $L$, is a simplex of $L$.  We call $K$ {\it flag} if any finite set of vertices, which are pairwise connected by edges of $K$, spans a simplex of $K$.

A {\it cycle} $\gamma$ in $K$ is a subcomplex of $K$ isomorphic to a triangulation of $S^{1}$. The length of $\gamma$ (denoted by $|\gamma|$) is the number of edges in $\gamma$.
A \emph{full cycle} in $K$ is
a cycle that is full as subcomplex of $K$.
We define the \emph{systole} of $K$
by \begin{center}$sys(K) = \min \{ |\alpha|: \alpha $ is a full cycle in $K \}$.\end{center}

Given a natural $k \geq 4,$ we call $K$ $k$\emph{-large} if
$sys(K) \geq k$ and $sys(\rm {Lk}(\sigma, K)) \geq k$ for each simplex
$\sigma$ of $K$. We call $K$ \emph{locally} $k$\emph{-large} if the
star of every simplex of $K$ is $k$-large. We call $K$
$k$-\emph{systolic} if it is connected, simply connected and locally
$k$-large. We abbreviate $6$-systolic to systolic.

A simplicial complex if $4$-large if and only if it is flag (see \cite{janusz_2006}, Fact $1.2(3)$, page $9$). In particular, systolic simplicial complexes are flag.

Given a subcomplex $Q$ of $K$, a {\it cycle in the pair} $(K,Q)$ is a polygonal path $\gamma$ in the $1$-skeleton of $K$ with endpoints contained in $Q$ and without self-intersections, except a possible coincidence of the endpoints. Such a cycle $\gamma$ is {\it full} in $(K,Q)$ if its simplicial span in $K$ is contained in the union $\gamma \cup Q$. A subcomplex $Q$ in $K$ is $3$-{\it convex} if $Q$ is full in $K$ and every full cycle in $(K,Q)$ of length less than $3$ is contained in $Q$. Any simplex in a flag simplicial complex is a $3$-convex subcomplex of the complex (see \cite{janusz_2006}, Example $3.1$, page $17$).

A subcomplex $Q$ is {\it locally} $3$-{\it convex} in $K$ if for every nonempty simplex $\sigma$ of $Q$, the link $Q_{\sigma}$ is $3$-convex in the link $K_{\sigma}$. Any $3$-convex subcomplex in a flag complex $K$ is a locally $3$-convex subcomplex in $K$ (see \cite{janusz_2006}, Fact $3.3$, page $18$).

We call a subcomplex $Q$ in a systolic simplicial complex $K$ {\it convex} if it is connected and locally $3$-convex.

The {\it combinatorial
distance} $d_{c}(v,v')$ between two vertices $v$ to $v'$ of a simplicial complex $K$ is the length of the edge-path joining $v$ to $v'$ in $K$ which has, among all edge-paths in $K$ joining $v$ to $v'$, the shortest length.

For a subcomplex $Q$ of a simplicial complex $K$, denote by $N_{K}(Q)$ the subcomplex of $K$ being the union of all (closed) simplices that intersect $Q$.

For a convex subcomplex $Q$ in a systolic complex $K$, define a system $B_{n} = B_{n}Q = \{ x \in K^{(0)} | d_{c}(x,v) \leq n, \forall v \in Q^{(0)} \}$ of combinatorial balls in $K$ of radii $n$ centered at $Q$ as $B_{0}:=Q$ and $B_{n+1}:=N_{K}(B_{n})$ for $n \geq 0$.

For $n \geq 1,$ the {\it sphere} of radius $n$ centered at a convex subcomplex $Q$ is the full subcomplex $S_{n}Q$ in $K$ spanned by the vertices at combinatorial distance $n$ from $Q$.

\begin{lemma}\label{1.2.1}
Let $Q$ be a convex subcomplex of a systolic complex $K$ and let $n \geq 1$ be a natural number. Then:
\begin{enumerate}
\item $B_{n}Q$ is the full subcomplex of $K$ spanned by the set of all vertices of $K$ at combinatorial distance $\leq n$ from $Q$;
\item $S_{n}Q \subset B_{n}Q$;
\item $S_{n}Q$ is equal to the union of those simplices in the ball $B_{n}Q$ which are disjoint with $B_{n-1}Q$ (for the proof see \cite{janusz_2006}, Lemma $7.6$, page $34$).
\end{enumerate}
\end{lemma}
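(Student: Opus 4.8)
The plan is to prove the three assertions of Lemma~\ref{1.2.1} by induction on $n$, unwinding the recursive definition $B_{n+1}Q = N_K(B_n Q)$ and the definition of $S_n Q$ as the full span of the combinatorial sphere at distance $n$. I would keep $Q$ convex throughout and exploit the fact, recorded earlier in the excerpt, that $B_n Q$ is defined precisely as the set of vertices at combinatorial distance $\leq n$ from $Q$; so the content of assertion (1) is really that $B_n Q$, as built recursively from the neighborhood operator $N_K$, coincides with the \emph{full} subcomplex on that vertex set.

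For assertion (1), I would argue that $N_K$ of a full subcomplex spanned by a vertex set $V$ adds exactly those vertices adjacent to $V$ (together with all simplices they span), and that systolicity—in particular flagness—guarantees the result is again full. The base case $B_0 Q = Q$ is full because $Q$, being convex, is in particular full in $K$. For the inductive step, I would check that a vertex lies in $N_K(B_n Q)$ iff it lies in a simplex meeting $B_n Q$, iff it is at combinatorial distance $\leq n+1$ from $Q$; the flag condition then upgrades ``every vertex at distance $\leq n+1$'' to ``the full subcomplex spanned by those vertices,'' since any simplex all of whose vertices are within distance $n+1$ must already be a simplex of $K$ and hence of the full span. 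Assertion (2) is then immediate: a vertex at combinatorial distance exactly $n$ from $Q$ is in particular at distance $\leq n$, so the sphere's vertex set sits inside the ball's vertex set, and taking full spans preserves the inclusion.

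Assertion (3) is the substantive one and I expect it to be the main obstacle. I would show both inclusions between $S_n Q$ and the union of simplices of $B_n Q$ disjoint from $B_{n-1}Q$. One direction is combinatorial bookkeeping: a simplex $\sigma \subset B_n Q$ disjoint from $B_{n-1}Q$ has all its vertices at distance exactly $n$ (distance $\leq n$ by membership in $B_n$, and $\geq n$ since none lies in $B_{n-1}$), hence $\sigma \subset S_n Q$ by fullness of the sphere. The reverse inclusion is where systolic geometry genuinely enters: given a simplex $\sigma$ of $S_n Q$, I must verify it is disjoint from $B_{n-1}Q$, i.e.\ that no vertex of $\sigma$ lies at distance $\leq n-1$. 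This is true by the very definition of $S_n Q$ as spanned by vertices at distance exactly $n$, but I would need the convexity of $Q$ to rule out the pathological possibility that a single simplex straddles two distance-levels in a way incompatible with the sphere being full; this is exactly the kind of ``no simplex crosses a sphere irregularly'' statement that relies on the projection properties of convex subcomplexes in systolic complexes.

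The cleanest route, and the one I would ultimately follow, is simply to invoke the cited reference—this is stated as Lemma~7.6 of~\cite{janusz_2006}—and reduce the work to reconciling the excerpt's definitions with Januszkiewicz--{\'S}wi{\c a}tkowski's. Concretely, I would verify that ``convex'' as defined here (connected and locally $3$-convex) matches their hypothesis, that the recursive $B_n$ and the full-span $S_n$ agree with their conventions, and then assertions (1)--(3) transfer verbatim. The anticipated difficulty is therefore not a deep new argument but a careful alignment of definitions, and in particular confirming that flagness (equivalently $4$-largeness, guaranteed by systolicity) does the fullness upgrades in the inductive step of (1) and the sphere-ball compatibility in (3).
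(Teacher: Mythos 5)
Your ultimate route---invoking Lemma 7.6 of \cite{janusz_2006} after checking that ``convex,'' $B_nQ$, and $S_nQ$ mean the same thing in both places---is exactly what the paper does: the paper offers no argument at all for this lemma beyond that citation, so on that level your proposal matches it.

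However, the direct induction you sketch as the mathematical content behind the citation has a genuine gap, and it is worth naming because you locate the difficulty in the wrong place. In assertion (1), the hard direction is not that simplices spanned by vertices at distance $\leq n+1$ are simplices of $K$ (that is true by definition of a full subcomplex and needs no hypothesis); it is that every such simplex actually lies in $B_{n+1}Q = N_K(B_nQ)$. Concretely: if $\sigma$ is a simplex \emph{all} of whose vertices are at combinatorial distance exactly $n+1$ from $Q$, you must produce a simplex of $K$ containing $\sigma$ and meeting $B_nQ$. Flagness does not give this---each vertex of $\sigma$ individually has a neighbor in $B_nQ$, but nothing in the flag condition forces a \emph{common} neighbor or a larger simplex reaching into the ball. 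This is precisely where local $6$-largeness enters: in \cite{janusz_2006} the fullness of $B_nQ$ is obtained from the convexity of balls around convex subcomplexes and the projection lemma (the statement recorded as Lemma \ref{1.2.3} here), all proven by a simultaneous induction on $n$; it can fail in flag, simply connected complexes that are not locally $6$-large. Conversely, assertion (3), which you call ``the substantive one,'' is routine bookkeeping once (1) is known: a simplex of $B_nQ$ disjoint from $B_{n-1}Q$ has all vertices at distance exactly $n$ (by (1) applied at radii $n$ and $n-1$), hence lies in $S_nQ$ by fullness of the sphere, and the reverse inclusion is immediate since $B_{n-1}Q$ contains every vertex at distance $\leq n-1$. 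So if you intend the sketch to stand on its own rather than defer to \cite{janusz_2006}, the fullness step in (1) is the part that still needs the systolic machinery, and as written your induction does not close.
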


\begin{lemma}\label{1.2.2}
Any convex subcomplex of a systolic complex is a $3$-convex subcomplex of the complex (see \cite{janusz_2006}, Lemma $7.2$, page $32$).
\end{lemma}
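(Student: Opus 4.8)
The plan is to unwind the definition of $3$-convexity into two obligations: that $Q$ is full in $K$, and that every full cycle $\gamma$ in the pair $(K,Q)$ with $|\gamma|<3$ is contained in $Q$. The second obligation splits into the case $|\gamma|=1$ — a single edge with both endpoints in $Q$, which (its span being the edge itself) is a full cycle forced into $Q$ exactly when $Q$ is edge-full — and the case $|\gamma|=2$, a two-edge path $v_0v_1v_2$ with $v_0,v_2\in Q^{(0)}$ whose simplicial span in $K$ lies in $\gamma\cup Q$, where the content is to show $v_1\in Q$. So the whole statement reduces to three claims: edge-fullness, fullness in higher dimensions, and the length-$2$ condition. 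Throughout I would use that $K$, being systolic, is flag, and that local $3$-convexity supplies, for every simplex $\sigma$ of $Q$, that $\mathrm{Lk}(\sigma,Q)$ is $3$-convex, hence full, in $\mathrm{Lk}(\sigma,K)$.

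The reduction of higher-dimensional fullness to edge-fullness is purely local. Suppose the vertices of a simplex $\tau=\{v_0,\dots,v_d\}\in K$ all lie in $Q$, with $d\geq 2$, and assume edge-fullness, so every edge $v_0v_i$ lies in $Q$. Then $v_1,\dots,v_d$ are vertices of $\mathrm{Lk}(v_0,Q)$ spanning a simplex of $\mathrm{Lk}(v_0,K)$; since $\mathrm{Lk}(v_0,Q)$ is full in $\mathrm{Lk}(v_0,K)$ by local $3$-convexity, the simplex $\tau\setminus v_0$ lies in $\mathrm{Lk}(v_0,Q)$, whence $\tau=v_0\ast(\tau\setminus v_0)\in Q$. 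Thus fullness in all dimensions follows once edge-fullness is in hand.

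Two of the surviving cases can be cleared cheaply. For the length-$2$ condition, first consider $v_0v_1v_2$ with $v_0v_2$ an edge of $K$. By edge-fullness $v_0v_2\in Q$, and since $K$ is flag the three pairwise-adjacent vertices span a $2$-simplex $v_0v_1v_2\in K$. This $2$-simplex belongs to the span of $\gamma$, hence to $\gamma\cup Q$; as it is not a simplex of $\gamma$ it must lie in $Q$, forcing $v_1\in Q$ and $\gamma\subset Q$. So I am reduced to the subcase where $v_0$ and $v_2$ are not joined by an edge. What then remains are exactly two genuinely global statements: edge-fullness, and the length-$2$ condition when $v_0v_2\notin K$. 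In both, connectedness of $Q$ lets me join the two distinguished vertices of $Q$ by an edge-path inside $Q$, and closing this path up with the external arc — the single edge, respectively the two-edge path through $v_1$ — produces a cycle in $K$.

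The crux, and the step I expect to be the main obstacle, is a local-to-global argument pushing such a cycle into $Q$. Since $K$ is simply connected the cycle bounds a simplicial disc $D\to K$; I would take one of minimal area. Local $6$-largeness forces a minimal disc to be combinatorially nonpositively curved, its interior vertices having links that are long cycles, so a Gauss--Bonnet-type count on $D$ must exhibit a boundary vertex, lying on the path inside $Q$, whose link in $D$ is a short arc. At such a vertex I would invoke $3$-convexity of the relevant link of $Q$ in the link of $K$ to homotope $D$ across it, strictly lowering the area while keeping the endpoints of the external arc in $Q$; iterating collapses the disc into $Q$ and drags the external arc along, yielding edge-fullness and the length-$2$ conclusion at once. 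Making this induction precise — selecting the simplex at which local $3$-convexity applies and checking that each elementary move really decreases area without spawning new external arcs — is where essentially all the systolic geometry is spent, the earlier reductions being formal by comparison.
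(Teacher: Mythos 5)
You should first note that the paper contains no proof of this lemma at all: it is imported wholesale from Januszkiewicz and Swiatkowski \cite{janusz_2006} (Lemma $7.2$), and their proof runs through entirely different machinery than yours, namely the construction of covers by iterated elementary extensions of a locally $3$-convex subcomplex: the tower of extensions of $Q$ yields a covering of $K$, simple connectivity forces that covering to be $K$ itself, and $3$-convexity of $Q$ in $K$ is extracted from the properties of the tower. Your route -- closing up with a path in $Q$, taking a minimal simplicial filling, and cutting corners via Gauss--Bonnet -- is a genuinely different, and legitimate, local-to-global mechanism; it is the standard one in the adjacent literature (minimal filling diagrams in systolic complexes, and Chepoi's work on bridged graphs). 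Your formal reductions are correct and complete: fullness does reduce to edge-fullness through fullness of the vertex links $\mathrm{Lk}(v_0,Q)$ supplied by local $3$-convexity, and the length-$2$ case with a chord $v_0v_2 \in K$ does follow from edge-fullness, flagness, and fullness of the cycle $\gamma$.

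Three points in the core step need to be nailed down before this is a proof. First, the assertion that a minimal filling disc exists, is nondegenerate, and has interior vertex degrees at least $6$ is itself a theorem (the filling-diagram lemma); it depends only on simple connectivity and local $6$-largeness, so there is no circularity with the statement being proved, but it must be cited or proved. Second, your claim that Gauss--Bonnet produces a corner \emph{on the path inside} $Q$ is not automatic, since the boundary defect (which totals $6$) could a priori concentrate on the external arc; what saves it is precisely your prior disposal of the chord case: the endpoints $v_0, v_2$ each absorb at most $2$ units of defect, and in the chordless case the middle vertex $v_1$ cannot have degree $2$ in the disc (a degree-$2$ corner there would force the chord $v_0v_2$ into $K$ by nondegeneracy of the filling), so the arc carries at most $5$ units and a corner in the interior of the $Q$-path is forced. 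This counting is the hinge of the argument and should be made explicit. Third, the surgery bookkeeping: at a degree-$3$ corner $u$ the reroute goes through a vertex $z$ which the length-$2$ condition of $3$-convexity of $\mathrm{Lk}(u,Q)$ does place in $Q$ (or else the chord in the link exists, and a diagonal flip followed by a degree-$2$ cut applies), but $z$ may already lie on the $Q$-path, so one must excise the resulting subloop and pass again to a minimal filling after each move. None of these is fatal; they are exactly the technical content that the covering-tower argument of \cite{janusz_2006} avoids, at the price of being less geometrically transparent.
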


\begin{lemma}\label{1.2.3}
For any convex subcomplex $Q$ in a systolic complex $K$ and for
any simplex $\sigma \subset N_{K}(Q)$ disjoint from $Q$, the
intersection $Q \cap \rm{St}(\sigma, K)$ is a single simplex of
$Q$ (for the proof see \cite{janusz_2006}, chapter $7$, page
$34$).
\end{lemma}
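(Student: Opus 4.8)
The plan is to describe $R := Q \cap \mathrm{St}(\sigma,K)$ on the level of vertices and then to show that its vertex set spans a single simplex of $Q$. Since $K$ is flag and $\sigma$ is disjoint from $Q$, a vertex $v$ of $Q$ lies in $\mathrm{St}(\sigma,K)$ precisely when $\{v\}$ together with the vertices of $\sigma$ spans a simplex of $K$, that is, when $v$ is joined by an edge to every vertex of $\sigma$; hence the vertex set of $R$ is $V(R) = \{ v \in Q^{(0)} : v \text{ is adjacent to every vertex of } \sigma \}$. I would first record that $R$ is nonempty: because $\sigma \subset N_{K}(Q)$, the simplex $\sigma$ is a face of some simplex $\tau$ of $K$ meeting $Q$, and since $Q$ is a subcomplex $\tau$ has a vertex $u$ in $Q$; as $\{u\} \cup \sigma \subseteq \tau$, the vertex $u$ is adjacent to every vertex of $\sigma$, so $u \in V(R)$.

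The heart of the argument is to show that any two vertices $v,w \in V(R)$ are joined by an edge, and here I would invoke that $Q$ is $3$-convex (Lemma \ref{1.2.2}). Fix a vertex $s$ of $\sigma$; since $\sigma$ is disjoint from $Q$ we have $s \notin Q$, whereas $v,w \in Q$. Suppose, for contradiction, that $v$ and $w$ are not adjacent. Then the polygonal path $\gamma = (v,s,w)$ is a cycle in the pair $(K,Q)$ of length $2$: its three vertices are distinct, it has no self-intersection, and its endpoints $v,w$ lie in $Q$. Because $v$ and $w$ are not adjacent, the simplicial span of $\{v,s,w\}$ in $K$ consists only of the two edges $vs$ and $sw$ together with their vertices, so this span is contained in $\gamma$ and $\gamma$ is full in $(K,Q)$. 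Being a full cycle in $(K,Q)$ of length less than $3$, it must be contained in $Q$ by $3$-convexity; this contradicts $s \notin Q$. Hence $v$ and $w$ are adjacent.

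With pairwise adjacency in hand, I would finish using flagness and fullness. The vertices of $V(R)$ are pairwise connected by edges, so by flagness of $K$ they span a simplex $\rho$; since $V(\rho) = V(R) \subseteq Q^{(0)}$ and $Q$ is full, $\rho$ is a simplex of $Q$. Moreover the vertices of $\rho$ are pairwise adjacent, those of $\sigma$ are pairwise adjacent, and every vertex of $\rho$ is adjacent to every vertex of $\sigma$, so flagness gives that $\sigma \ast \rho$ is a simplex of $K$; therefore $\rho \subseteq \mathrm{St}(\sigma,K)$ and $\rho \in R$. Finally, any simplex of $R$ has all its vertices in $V(R) = V(\rho)$ and lies in $Q$, so it is a face of $\rho$, while conversely every face of $\rho$ lies in $R$. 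Thus $R = \overline{\rho}$ is a single simplex of $Q$, as claimed.

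The step I expect to be the main obstacle is the adjacency claim: one must verify carefully that the length-$2$ path $(v,s,w)$ really qualifies as a full cycle in the pair $(K,Q)$ — in particular that when $v \not\sim w$ its simplicial span introduces no $2$-simplex — so that the definition of $3$-convexity applies and yields the contradiction. Once this is secured, the remaining steps are routine consequences of the flag property of $K$ and the fullness of $Q$.
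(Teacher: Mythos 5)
Your proof is correct: the paper itself offers no argument for this lemma (it defers entirely to \cite{janusz_2006}, chapter 7), and your reconstruction --- using $3$-convexity of $Q$ (Lemma \ref{1.2.2}) to force pairwise adjacency of the vertices of $Q \cap \mathrm{St}(\sigma,K)$ via the length-$2$ full cycle $(v,s,w)$, and then flagness of $K$ together with fullness of $Q$ to conclude that these vertices span a single simplex of $Q$ lying in $\mathrm{St}(\sigma,K)$ --- is essentially the standard argument from that source. The only point worth adding is that flagness, as defined in the paper, applies to \emph{finite} vertex sets, so you should observe that your pairwise-adjacent set $V(R)$ is automatically finite because $K$ is finite dimensional (any $\dim K + 2$ pairwise adjacent vertices would span a simplex of dimension exceeding $\dim K$).
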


We introduce and study further projections of systolic complexes onto convex subcomplexes.

Let $K'$ denote the first barycentric subdivision of a simplicial complex $K$. For a simplex $\sigma$ of $K$, let $b_{\sigma}$ denote the barycenter of $\sigma$ which is a vertex of $K'$.

Let $Q$ be a convex subcomplex of a systolic complex $K$. We call
an {\it elementary projection} a simplicial map $\pi_{Q} :
(B_{1}Q)' \rightarrow Q'$ between the first barycentric
subdivision of these subcomplexes defined as follows: if $\sigma
\cap Q \neq \varnothing$, we put $\pi_{Q}(b_{\sigma}) = b_{\sigma
\cap Q}$; if $\sigma \cap Q = \varnothing,$ then we put
$\pi_{Q}(b_{\sigma}) = b_{\tau},$ whereas $\tau = \rm{St} (\sigma,
K) \cap Q$. Afterwards we extend simplicially. According to the
previous two lemmas, the simplicial map $\pi_{Q}$ is well defined.

We denote by $P^{n}_{Q} : (B_{n}Q)' \rightarrow Q'$ the composition map $\pi_{B_{n-1}Q} \circ \pi_{B_{n-2}Q} \circ ... \circ \pi_{B_{1}Q} \circ \pi_{Q}$. We denote by $P_{Q} : X' \rightarrow Q'$ the union $\cup_{n}P^{n}_{Q}$. We call $P_{Q}$ the {\it projection} to $Q$.

We will use the following variants of projection maps between face posets rather than barycentric subdivisions. If $Q$ is a convex subcomplex of $K$ and $\sigma$ is a simplex of $K$, then put $\overline{P}_{Q}(\sigma) = \tau$ if and only if $P_{Q}(\sigma) = \tau$ and put $\overline{\pi}_{Q}(\sigma) = \tau$ if and only if $\pi_{Q}(\sigma) = \tau$.

Let $Q$ be a convex subcomplex of $K$ and let $\sigma$ be a simplex of $S_{n}(Q)$. The {\it projection ray} from $\sigma$ to $Q$ is the sequence $\sigma = \sigma_{0}, \sigma_{1}, ..., \sigma_{n}$ of simplices in $K$ given by $\sigma_{k} = \overline{\pi}_{B_{n-k+1}Q}\sigma_{k-1}$ for $1 \leq k \leq n$. Equivalently, this sequence is given by $\sigma_{k} = \overline{P}_{B_{n-k}Q}(\sigma_{0})$.

Any two consecutive simplices $\sigma_{k}, \sigma_{k+1}$ in a projection ray are disjoint and span a simplex of $K$ (see \cite{janusz_2006}, Fact $8.3.1$, page $37$).

If $\sigma_{k}$ and $\sigma_{m}$ are simplices in a projection ray then for any vertices $v \in \sigma_{k}$ and $w \in \sigma_{m}$ we have $d_{c}(v,w) = |k-m|$ (see \cite{janusz_2006}, Fact $8.3.2$, page $37$).

A projection ray in a systolic complex is uniquely determined by its initial and final simplex (see \cite{janusz_2006}, Corollary $8.5$, page $37$).

A sequence of simplices $(\sigma_{n})$ in a systolic simplicial complex $K$ is a {\it directed geodesic} if it satisfies the following properties:
\begin{enumerate}
\item any two consecutive simplices $\sigma_{i}, \sigma_{i+1}$ in the sequence are disjoint and span a simplex of $K$;
\item for any three consecutive simplices $\sigma_{i}, \sigma_{i+1}, \sigma_{i+2}$ in the sequence we have $\rm{St} (\sigma_{i},K) \cap B_{1}(\sigma_{i+2},K) = \sigma_{i+1}$.
\end{enumerate}

Each projection ray in a systolic simplicial complex is a directed geodesic (for the proof see \cite{janusz_2006}, chapter $9$, page $39$).

A directed geodesic $\sigma_{0}, ..., \sigma_{n}$ in a systolic complex is a projection ray on its final simplex $\sigma_{n}$ (for the proof see \cite{janusz_2006}, chapter $9$, page $39$).

The above results imply that the sets of finite directed geodesics and of projection rays coincide.

Given two vertices $v,w$ in a systolic complex, there is exacty one directed geodesic from $v$ to $w$ (see \cite{janusz_2006}, Corollay $9.7$, page $39$). There is another unique directed geodesic from $w$ to $v$.

Let $v,w$ be two vertices in a systolic complex such that $d_{c}(v,w) = n$. Then the directed geodesic from $v$ to $w$ consists of $n+1$ simplices (see \cite{janusz_2006}, Corollay $9.8$, page $39$).

\subsection{Discrete Morse theory}

The main tool we shall use in the proof is discrete Morse theory. In
this section we introduce the main notions in discrete Morse theory
and we give a few basic results regarding these notions.

Let $(K,\subseteq)$ denote the face poset of a simplicial complex $K$, i.e. the set of nonempty simplices of $K$ ordered with respect to inclusion. Let $(\mathbf{R},\leq)$ denote the poset of real numbers with the usual ordering. A {\it discrete Morse function} is an order-preserving map $f:K\rightarrow \mathbf{R}$ that assigns to each simplex in the complex a real number such that the preimage $f^{-1}(r)$ of any number $r$ consists of at most $2$ elements. The function assigns the same value to two distinct simplices in the complex if one of the simplices is contained in the other. We call a simplex of $K$ {\it critical} if it is a simplex at which $f$ is strictly increasing.

The function $f$ induces a perfect matching on the non-critical simplices: we match two simplices if $f$ assigns them the same real number. We call this matching a {\it Morse matching} and we represent it by a system of arrows: for any two simplices $\sigma, \tau$ such that $\sigma \subsetneq \tau$ and $f(\sigma) = f(\tau)$, one draws an arrow from the barycenter of $\sigma$ to the barycenter of $\tau$.
We will represent a Morse matching by its associated partial function $\theta : K \rightarrow K$ which is defined as follows. Set
$\theta(\sigma)= \sigma$ if $\sigma$ is unmatched (i.e. if $\sigma$ is a critical simplex); set $\theta(\sigma)= \tau$ if $\sigma$ is matched with $\tau$ and $\dim \sigma < \dim \tau$.

A {\it discrete vector field} $V : K \rightarrow K$ is a collection of pairs $(\sigma, \tau)$ such that $\sigma$ is a codimension-one face of $\tau$, and no face of $K$ belongs to two different pairs of $V$. A {\it gradient path} in $V$ is a concatenation of pairs of $V$ $(\sigma_{0},\tau_{0}), (\sigma_{1},\tau_{1}), ..., (\sigma_{k},\tau_{k}), k \geq 1$ such that $\sigma_{i+1}$ differs from $\sigma_{i}$ and it is a codimension-one face of $\tau_{i}$. We say a gradient path is {\it closed} if $\sigma_{0} = \sigma_{k}$ for some $k$. A discrete vector field is a Morse matching if and only if $V$ contains no closed gradient paths (for the proof see \cite{forman_1998}, chapter $9$, page $131$).

An important result in discrete Morse theory is the following.
If $K$ is a simplicial complex with a discrete Morse function, then $K$ is homotopy equivalent to a CW complex with exactly one cell of dimension $i$
for each critical simplex of dimension $i$ (for the proof see \cite{forman_2002}, chapter $2$, page $10$).

Inside a simplicial complex $K$ we call a simplex ${\it free}$ if it is a face of a single simplex of $K$. An {\it elementary collapse} is the deletion of a free face $\sigma$ from a simplicial complex $K$. We say $K$ elementary collapses to $K \setminus \sigma$. We say $K$ {\it collapses} to $K_{1}$ if $K$ can be reduced to $K_{1}$ by a sequence of elementary collapses. We call a complex {\it collapsible} if it can be reduced by elementary collapses to one of its vertices.

The collapsibility of simplicial complexes can be studied using discrete Morse theory due to the following result. A simplicial complex is collapsible if and only if it admits a discrete Morse function with a single critical simplex (for the proof see \cite{forman_1998}).

Let $K$ be a locally finite simplicial complex. We say $K$ has an {\it arborescent structure} if it is a monotone union $\cup_{n=1}^{\infty}L_{n}$ of a sequence of collapsible subcomplexes $L_{n}$.

\section{The collapsibility of systolic simplicial complexes}

In this section we prove, using discrete Morse theory, that finite systolic simplicial complexes are collapsible. We shall fix a vertex and consider it as being a convex subcomplex of the complex. Our proof relies on the fact that in a systolic complex the projection of any simplex in the $1$-ball around a convex subcomplex of the complex is a single simplex.

Let $K$ be a systolic simplicial complex.
We start by defining a matching on the complex conveniently. Once the construction of the matching is done, our next goal will be to show that the chosen matching is, due to properties of systolic complexes, a Morse matching. The arrows will be drawn along projection rays onto a convex subcomplex of the complex.

We fix a vertex $v$ of $K$. Since any simplex in a flag complex is a $3$-convex subcomplex of $K$, we note that $Q = \{v\}$ is $3$-convex. Moreover, the fact that any $3$-convex subcomplex in a flag complex is locally $3$-convex implies that $Q$ is locally $3$-convex. So, because $Q$ is connected, it is a convex subcomplex of $K$.

We will consider projections of simplices onto balls around $Q$. We start by projecting simplices $\sigma$ of $K$ that belong to spheres $S_{n}Q$ around $Q, n \geq 2$. The matching of simplices in the $1$-sphere around $Q$ will be analyzed separately.

Let $\sigma$ be a simplex of $S_{n}Q, n \geq 2$. We consider the projection map $\overline{\pi}  : B_{n}Q \rightarrow B_{n-1}Q$. The projection of $\sigma$ onto $B_{n-1}Q$ is $\tau = \overline{\pi}_{B_{n-1}Q}\sigma$. We have two cases. Either $\sigma \cap B_{n-1}Q = \varnothing$ in which case let $\tau = \rm{St} (\sigma,K) \cap B_{n-1}Q$.
Or, else $\sigma \cap B_{n-1}Q \neq \varnothing$ in which case let $\tau = \sigma \cap B_{n-1}Q$.

In the first case, according to Lemma \ref{1.2.3}, the intersection of the star of $\sigma$ with $B_{n-1}Q$ is a unique simplex $\tau$. Moreover, $\sigma$ and $\tau$ are two consecutive simplices in the projection ray from $\sigma$ to $Q$. Hence they are disjoint and span a simplex of $K$.

Our goal is to match simplices with one another in order to construct gradient paths. Hence we must pair simplices with each other such that one simplex in the pair is a codimension-one face of the other. We borrow the idea from the proof given on CAT(0) spaces in \cite{benedetti_2012} (see chapter $3.1$). We choose a vertex $w$ of $\tau$. Since $\sigma$ and $\tau$ are disjoint and span a simplex, the vertex $w$ and the simplex $\sigma$ span another simplex, namely $\sigma \ast w$. Note that $\sigma$ is a codimension-one face of $\sigma \ast w$.
We define a pointer function $y_{Q} : K \rightarrow K$ which maps $\sigma$ (a simplex disjoint with $B_{n-1}Q$) into the vertex $w$. Note that $y_{Q}$ is not injective.

We analyze further the case when $\sigma$ has nonempty
intersection with $B_{n-1}Q$. Let $\tau = \sigma \cap B_{n-1}Q$.

Note that, since $\tau$ is a face of $\sigma$, any vertex of $\tau$ is a vertex of $\sigma$. Hence by choosing any vertex $u$ of $\tau$ and matching $\sigma$ with $\sigma \ast u,$ we would in fact match $\sigma$ with $\sigma$. So $\sigma$ would be a critical simplex of $K$. Our aim, however, is to construct the matching in such a manner that it has a single critical simplex (namely a critical vertex). We can therefore no longer argue as in the first case.
For those simplices $\sigma$ of $K$ which have nonempty intersection with $B_{n-1}Q$, we proceed as follows.
We choose a vertex $u_{1}$ of $\tau$. There is a unique directed geodesic $\tau_{1}\tau_{2} ... \tau_{n}$ joining $u_{1}$ with $v$. Note that $u_{1}$ is a vertex of both $\tau$ and $\tau_{1}$. The simplices $\tau$ and $\tau_{1}$ do not necessarily coincide. There exists a vertex $u_{2}$ of $\tau_{2}$ such that $d_{c}(u_{1},u_{2}) = 1$. Such a vertex $u_{2}$ exists because any sequence of vertices in a systolic complex, such that any two of its
consecutive vertices belong to two consecutive simplices in a directed geodesic, is a geodesic in the 1-skeleton of the complex. Note that the simplices $\tau_{1}$ and $\tau_{2}$ are disjoint since they are consecutive simplices of a directed geodesic.
Because $u_{1}$ is a vertex of $\tau$, knowing that $\tau$ is a face of $\sigma$, $u_{1}$ is also a vertex of $\sigma$. Hence, since $u_{2}$ is a neighbor of $u_{1}$,
the join $u_{2} \ast \sigma$ is a simplex of $K$.
For all simplices $\sigma$ of $K$ for whom $\sigma \cap B_{n-1}Q \neq \varnothing$, the pointer function $y_{Q}$ maps $\sigma$ into the vertex $u_{2}$.

Consider further simplices $\sigma$ of $K$ which belong to $S_{1}Q$. Note that $\rm{}St \sigma \cap Q = \tau$ whereas $\tau \subset Q$. But $Q = \{v\},$ and therefore $\tau = v$. So $y_{Q}(\sigma) = v$.

We will obtain a matching from $y_{Q}$ by pairing each still unmatched simplex $\sigma$ with the simplex $\sigma \ast y_{Q}(\sigma)$. If the pointer function maps distinct simplices $\sigma_{1}, \sigma_{2}$ to the same vertex, we may proceed since although $y_{Q}(\sigma_{1}) = y_{Q}(\sigma_{2}),$ we have $\sigma_{1} \ast y_{Q}(\sigma_{1}) \neq \sigma_{2} \ast y_{Q}(\sigma_{2}).$ Note that the pointer function might map distinct simplices to the same vertex $x$. This is not a problem however since we do not match $x$ to distinct simplices in the complex, but we match joins of $x$ with distinct simplices in the complex to these simplices.

Next we define a matching $\theta_{Q} : K \rightarrow K$ associated with  projection maps $\overline{\pi}  : B_{n}Q \rightarrow B_{n-1}Q$ onto balls around $Q$, $n \geq 1$. The definition is recursive on dimension. We start defining $\theta_{Q}$ for the faces of lower dimension and keep on matching simplices with one of their codimension-one faces until we reach facets (i.e. simplices whose dimension equals one less than the dimension of the complex).

We set $\theta_{Q}(\emptyset) = \emptyset$.
For any simplex $\sigma$ of $K$, if for all simplices $\tau \subsetneq \sigma$ one has $\theta_{Q}(\tau) \neq \sigma,$ we define $\theta_{Q}(\sigma) = y_{Q}(\sigma) \ast \sigma$ (i.e. if a simplex $\sigma$ has not been matched yet with any of its faces $\tau$, then match $\sigma$ with $y_{Q}(\sigma) \ast \sigma$).

Note that once we have performed the matching on all simplices of $K$, each simplex of $K$ is either in the image of $\theta_{Q}$, or in the domain of $\theta_{Q}$, or $\theta_{Q}(\sigma) = \sigma$ (i.e. $\sigma$ is a critical simplex).

\begin{definition}
Let $\theta : K \rightarrow K$  be a matching on the simplices of a simplicial complex $K$. We say $\theta$ is a {\it gradient matching} if $\theta = \theta_{Q}$ whereas $\theta_{Q} : K \rightarrow K$ is a matching associated with projection maps onto balls around $Q = \{v\}$.
\end{definition}

\begin{theorem}\label{2.1}
Let $K$ be a systolic simplicial complex and let $Q = \{v\}$ be a convex subcomplex of $K$. The induced gradient matching $\theta_{Q}$ is a Morse matching.
\end{theorem}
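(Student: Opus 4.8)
The plan is to invoke the criterion quoted earlier in the excerpt, that a discrete vector field is a Morse matching precisely when it contains no closed gradient paths, and thereby reduce the whole statement to an acyclicity property of $\theta_Q$. First I would check that $\theta_Q$ is a genuine discrete vector field. By its recursive-on-dimension definition, a simplex $\sigma$ is matched upward with $\sigma \ast y_Q(\sigma)$ only when it has not already been matched downward with one of its facets, and $\sigma$ is indeed a codimension-one face of $\sigma \ast y_Q(\sigma)$ because $y_Q(\sigma)$ is a vertex disjoint from $\sigma$ that spans a simplex with it. Hence every simplex of $K$ lies in at most one pair, and $\theta_Q$ is a matching; this step is routine bookkeeping.

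The analytic core is to produce a quantity that decreases along gradient paths. I would set $\rho(\sigma) = \max\{d_c(x,v) : x \in \sigma^{(0)}\}$, the combinatorial distance from $Q=\{v\}$ to the farthest vertex of $\sigma$. The decisive input is the claim that $d_c(y_Q(\sigma),v) < \rho(\sigma)$ for every simplex $\sigma$. Indeed, if $\sigma$ lies in a sphere $S_nQ$, then $y_Q(\sigma)$ is a vertex of $\mathrm{St}(\sigma,K)\cap B_{n-1}Q$, hence at distance at most $n-1$; and if $\sigma$ meets $B_{n-1}Q$, then $y_Q(\sigma)=u_2$ lies one step nearer $v$ than a vertex $u_1$ of $\sigma\cap B_{n-1}Q$ along the unique directed geodesic to $v$, so its distance is at most $n-2$. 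In either case the pointer is strictly closer to $Q$ than the farthest vertices of $\sigma$, and this rests on Lemma \ref{1.2.3} together with the uniqueness of directed geodesics.

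Every gradient step replaces $\sigma_i$ by $\sigma_{i+1} = (\sigma_i \setminus u_i)\ast y_Q(\sigma_i)$, that is, it swaps one vertex of $\sigma_i$ for the pointer. Since the new vertex is never among the farthest, $\rho$ is non-increasing along any gradient path, so on a hypothetical closed path it is constant, say equal to $n$. Counting the vertices at distance exactly $n$ and observing that such vertices are never added (the pointer is nearer) while, for the count to return to its initial value, they can never be removed either, one concludes that the farthest vertices are common to all simplices of the loop. Writing each $\sigma_i = F \ast \rho_i$ with $F$ the fixed farthest face, the closed path descends to a closed gradient path among the nearer parts $\rho_i$, all of strictly smaller $\rho$.

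The main obstacle is to finish this descent. A naive layer-by-layer repetition of the counting argument is not quite enough: the pointer $y_Q(\sigma_i)$, although always nearer than the farthest layer, may enter a nearer part at a distance exceeding the current maximum of the vertices it meets there, so the plain lexicographic distance profile can increase. Here I expect to combine two facts. First, the uniqueness of directed geodesics and of projection rays in a systolic complex pins down the trajectory the pointer follows toward $v$. Second, the exact rule governing continuation of a gradient path: a codimension-one face of $\sigma_i\ast y_Q(\sigma_i)$ prolongs the path only if it is itself matched upward, whereas the problematic faces are precisely those of the form $y_Q(\tau)\ast\tau$, which are matched downward and therefore terminate the path. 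Used together, these would show that a closed gradient path forces some vertex to return to an earlier position while advancing monotonically toward $v$ along a unique directed geodesic, which is impossible. Coordinating the distance bookkeeping with directed-geodesic uniqueness at the lower layers is where the real work lies, and I expect it to be the crux of the argument.
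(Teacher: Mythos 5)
Your reduction to acyclicity, the verification that the pairs $(\sigma,\theta_{Q}(\sigma))$ form a discrete vector field, and your top-layer analysis are all sound: the pointer satisfies $d_{c}(y_{Q}(\sigma),v)<\rho(\sigma)$, so $\rho$ is non-increasing along gradient paths, and on a hypothetical closed path the face $F$ spanned by the farthest vertices is common to every simplex of the loop (indeed, if some $\sigma_{i}$ equaled $F$, the next step would have to delete a farthest vertex, and the count of farthest vertices could never recover). Up to this point your bookkeeping is more careful than the paper's own. But the argument stops exactly where the theorem lives. The sequence of ``nearer parts'' $\rho_{i}$ with $\sigma_{i}=F\ast\rho_{i}$ is \emph{not} a gradient path of the vector field $V$: the pairs $(\rho_{i},\,\rho_{i}\ast y_{Q}(\sigma_{i}))$ need not belong to $V$, since the pointer of $\sigma_{i}$ is computed from a chosen vertex $u_{1}$ of $\sigma_{i}\cap B_{n-1}Q$ via the directed geodesic of that vertex, not from $\rho_{i}$ regarded as a simplex in its own right (whose own pointer $y_{Q}(\rho_{i})$ is generally different). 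So there is no smaller instance of the same problem to which an induction could apply. Moreover, as you yourself observe, the natural substitute invariant fails: in the second case of the construction, $y_{Q}(\sigma_{i})$ is one step closer to $v$ than the \emph{chosen} vertex $u_{1}$, not than the \emph{removed} vertex, so the lexicographic distance profile of the $\rho_{i}$ can genuinely increase along a path. Your final paragraph replaces the missing argument with the expectation that directed-geodesic uniqueness plus the matching rules will produce a contradiction; that expectation \emph{is} the theorem, not a proof of it.

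The idea your attempt lacks is the one the paper's (admittedly terse) proof rests on: the arrows of $\theta_{Q}$ are drawn along projection rays, so a gradient path tracks the projection ray --- equivalently, by the results quoted in the preliminaries, the directed geodesic --- from its current simplex into $v$. Since the finite directed geodesic into $v$ is unique and oriented (the directed geodesic from $v$ outward is a different object, which the matching never uses), gradient paths move strictly inward with respect to the projection structure $\overline{\pi}_{B_{n-1}Q}$ and can never close up. Establishing this identification of gradient trajectories with projection rays is precisely the step your distance bookkeeping cannot substitute for; without it, the acyclicity claim --- the entire content of the theorem --- remains unproven in your proposal.
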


\begin{proof}
Given the matching performed above, we note that for any simplex $\sigma$ of $K$, the pairs $(\sigma, \theta_{Q}(\sigma))$ form a discrete vector field $V$. Moreover the discrete vector field $V$ contains no closed gradient paths.
This holds because
the gradient paths point along projection rays joining simplices in the complex with $v$. Projection rays coincide with directed geodesics. There is a unique directed geodesic from any vertex of the complex to $v$. Although there is another unique directed geodesic pointing from $v$ in the opposite direction, note that the gradient paths are defined along those directed geodesics pointing into $v$.
\end{proof}

\begin{corollary}\label{2.2}
Let $K$ be a systolic simplicial complex. Then $K$ is collapsible.
\end{corollary}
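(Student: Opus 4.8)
The plan is to read off collapsibility from Theorem~\ref{2.1} by means of the criterion recorded in the Preliminaries: a finite simplicial complex is collapsible if and only if it admits a discrete Morse function with a single critical simplex. Throughout I take $K$ to be finite, as throughout this section. By Theorem~\ref{2.1} the gradient matching $\theta_{Q}$ attached to the convex subcomplex $Q=\{v\}$ is a Morse matching, so by the correspondence between Morse matchings and discrete Morse functions recalled above it is induced by a discrete Morse function $f$ whose critical simplices are precisely the simplices left unmatched by $\theta_{Q}$. Hence it suffices to prove that $v$ is the unique unmatched simplex.

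First I would check that the pointer function $y_{Q}$ is defined on every simplex of $K$ other than $v$. Indeed, each simplex $\sigma\neq\{v\}$ lies in some sphere $S_{n}Q$ with $n\geq 1$, and the construction of $y_{Q}$ assigns to $\sigma$ a vertex lying one combinatorial step closer to $v$ along a projection ray; since this vertex sits at combinatorial distance strictly smaller than $\sigma$, it is not a vertex of $\sigma$, so that $\sigma$ is a genuine codimension-one face of $y_{Q}(\sigma)\ast\sigma$.

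Next I would trace through the recursive definition of $\theta_{Q}$ to see that no simplex besides $v$ can remain unmatched. Fix $\sigma\neq\{v\}$. If some proper face $\tau\subsetneq\sigma$ has $\theta_{Q}(\tau)=\sigma$, then $\sigma$ lies in the image of $\theta_{Q}$ and is matched downward; otherwise the recursive rule sets $\theta_{Q}(\sigma)=y_{Q}(\sigma)\ast\sigma\neq\sigma$, matching $\sigma$ upward. In either case $\sigma$ is not critical. For $\sigma=\{v\}$, by contrast, the only proper face is $\emptyset$ with $\theta_{Q}(\emptyset)=\emptyset\neq\{v\}$, so $v$ is not matched downward, while $y_{Q}$ is undefined at $v$, so $v$ cannot be matched upward either. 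Thus $v$ is the single unmatched, hence critical, simplex.

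Applying the collapsibility criterion to $f$, which has $v$ as its only critical simplex, then gives that $K$ collapses to the vertex $v$, which is what the corollary asserts. The step demanding the most care is the previous one: one must make sure that the pointer function is everywhere defined away from $v$ and that the recursion exhausts every remaining simplex, so that no unexpected critical cell of positive dimension survives to inflate the homotopy type beyond a point.
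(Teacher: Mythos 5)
Your proposal follows essentially the same route as the paper's own proof: both rely on the preceding theorem to certify that the gradient matching $\theta_{Q}$ attached to $Q=\{v\}$ is a Morse matching, identify the vertex $v$ as the unique critical (unmatched) simplex, and then conclude via the criterion that a complex admitting a discrete Morse function with a single critical simplex is collapsible. You spell out in more detail why no simplex other than $v$ can remain unmatched (a point the paper simply asserts), but the argument is the same.
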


\begin{proof}
Fix a vertex $v$ of $K$. Note that $Q = \{v\}$ is a convex subcomplex of $K$. Perform on the face poset of $K$ the Morse matching constructed at the beginning of this section. According to the previous theorem, the matching $(\sigma, \theta_{Q}(\sigma))$ induces a Morse matching, i.e. a discrete vector field with no closed gradient paths. The only simplex matched to itself (i.e. it is critical) is the vertex $v$. So $K$ admits a discrete Morse function with a single critical simplex. $K$ is therefore collapsible.
\end{proof}

\section{The combinatorial structure of locally finite systolic simplicial complexes}

In this section we extend the result obtained by Corson and Trace in dimension $2$ regarding the arborescent structure of locally finite systolic simplicial complexes, to all dimensions (see \cite{corson_1998}, Theorem $3.5$).

The barrier of dimension $2$ may be broken due to the following result proven in \cite{lazar_2013} (Corollary $3.4$) and \cite{chepoi_2009} (Theorem $5.1$).

\begin{theorem}\label{3.1}
Any finite systolic simplicial complex is collapsible.
\end{theorem}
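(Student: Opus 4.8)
The plan is to observe that this statement is precisely the content of Corollary \ref{2.2}, so the proof amounts to assembling the Morse-theoretic machinery developed in Section 2 and invoking the standard collapsibility criterion. The overall strategy is to fix a vertex of $K$, regard it as a convex subcomplex $Q$, build the gradient matching $\theta_{Q}$ associated to the projections onto balls around $Q$, verify that $\theta_{Q}$ is a Morse matching, and finally translate the existence of a Morse matching with a single critical simplex into collapsibility.

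First I would fix a vertex $v$ of $K$ and set $Q = \{v\}$. Since $K$ is systolic it is flag, and since every simplex of a flag complex is a $3$-convex and hence locally $3$-convex subcomplex, the connected subcomplex $Q$ is convex. Next I would construct $\theta_{Q} : K \to K$ recursively on dimension exactly as above, matching each as-yet-unmatched simplex $\sigma$ with the join $y_{Q}(\sigma) \ast \sigma$, where $y_{Q}$ is the pointer function built from the projection rays toward $v$. I would then check that the resulting pairs $(\sigma, \theta_{Q}(\sigma))$ form a genuine discrete vector field, that is, that $\sigma$ is a codimension-one face of $\theta_{Q}(\sigma)$ and that no simplex of $K$ lies in two distinct pairs; this last point is secured by the recursive bookkeeping in the definition of $\theta_{Q}$, together with the observation that even when $y_{Q}(\sigma_{1}) = y_{Q}(\sigma_{2})$ the joins $\sigma_{1} \ast y_{Q}(\sigma_{1})$ and $\sigma_{2} \ast y_{Q}(\sigma_{2})$ remain distinct.

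With the discrete vector field in hand, I would invoke Theorem \ref{2.1}: the gradient matching $\theta_{Q}$ contains no closed gradient paths, hence is a Morse matching. I would then identify the critical simplices. By construction every simplex lying in a sphere $S_{n}Q$ with $n \geq 1$ is paired through the pointer function, so the only simplex matched to itself is $v$. Finally, since $K$ is finite, a Morse matching with the single critical simplex $v$ furnishes a discrete Morse function with exactly one critical cell, and by the criterion that a complex is collapsible precisely when it admits such a function, $K$ collapses to the point $v$.

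The main obstacle is not in this bookkeeping but in the acyclicity it relies on, already isolated in Theorem \ref{2.1}: one must know that the gradient flow never cycles. This is where systolic geometry enters decisively. The gradient paths run along projection rays to $v$, projection rays coincide with directed geodesics, and there is a \emph{unique} directed geodesic from any vertex to $v$; the flow is therefore strictly directed toward $v$ and cannot close up. The reverse directed geodesic emanating from $v$ does not interfere, since the matching is oriented only along geodesics pointing into $v$.
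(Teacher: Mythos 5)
Your proposal is correct in substance, but it takes a formally different route from the paper's own justification of this statement. In the paper, Theorem 3.1 carries no internal proof at all: it is imported from the literature, namely from Laz\u{a}r (Corollary 3.4 of that paper) and Chepoi--Osajda (Theorem 5.1), and is then used as a black box to establish the arborescence result (Theorem 3.2). What you do instead is derive the statement from the paper's own Section 2 machinery: fixing $Q=\{v\}$, building the pointer function $y_{Q}$ and the gradient matching $\theta_{Q}$, invoking the theorem that $\theta_{Q}$ is a Morse matching, and concluding collapsibility from the single critical simplex $v$ --- in other words, you reprove Corollary 2.2 and observe that Theorem 3.1 is its restatement for finite complexes (and you correctly note where finiteness enters). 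Your route makes the result self-contained and showcases the paper's actual contribution, the explicit Morse matching along projection rays; the paper's route, by contrast, rests Theorem 3.2 on independently established external results, which insulates it from the weak points of the Section 2 sketch that your argument inherits wholesale. Three such points deserve mention if your proof is to stand on its own rather than cite Corollary 2.2: first, the claim that $u_{2}\ast\sigma$ is a simplex of $K$ is not justified in Section 2 --- adjacency of $u_{2}$ to the single vertex $u_{1}$ of $\sigma$ does not by itself give adjacency of $u_{2}$ to all vertices of $\sigma$, which is what flagness requires; second, the verification that no simplex ends up in two distinct pairs needs more than the observation about equal pointer values, since distinct simplices with \emph{different} pointer vertices could a priori produce the same join; third, the acyclicity argument (gradient paths follow directed geodesics into $v$, hence cannot close up) is only a sketch, as a closed gradient path would have to be ruled out across simplices of varying dimension within a single sphere $S_{n}Q$ as well. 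None of these undermines your reduction of Theorem 3.1 to Corollary 2.2, but they explain why the paper chose to anchor Theorem 3.1 in the external references rather than in its own Section 2.
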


The main result of the section is the following.

\begin{theorem}\label{3.2}
Let $K$ be a locally finite systolic simplicial complex.
Then $K$ is a monotone union of a sequence of collapsible subcomplexes.
\end{theorem}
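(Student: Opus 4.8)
The plan is to exhaust $K$ by the combinatorial balls around a fixed vertex and to show that each such ball is a finite collapsible subcomplex, so that Theorem \ref{3.1} applies directly. Concretely, I would fix a vertex $v$ of $K$ and set $Q=\{v\}$, which is a convex subcomplex of $K$ exactly as verified at the beginning of Section 2. For $n\geq 0$ put $L_{n}=B_{n}Q$. Since $B_{n+1}Q=N_{K}(B_{n}Q)\supseteq B_{n}Q$ by definition, the subcomplexes $L_{n}$ form a monotone increasing sequence $L_{0}\subseteq L_{1}\subseteq\cdots$, so the only things to establish are that this sequence exhausts $K$ and that each $L_{n}$ is a finite collapsible complex.

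First I would check exhaustion, i.e. $\bigcup_{n}L_{n}=K$. As $K$ is systolic it is connected, so its $1$-skeleton is connected and every vertex $w$ lies at finite combinatorial distance $d_{c}(w,v)$ from $v$. Given any simplex $\sigma$ of $K$, set $n=\max\{d_{c}(w,v): w \text{ a vertex of } \sigma\}$; by Lemma \ref{1.2.1}(1), which identifies $B_{n}Q$ with the full subcomplex spanned by the vertices at distance $\leq n$ from $Q$, we get $\sigma\subseteq L_{n}$. Hence every simplex of $K$ eventually lies in some $L_{n}$. Next I would prove finiteness of each $L_{n}$ by induction: $L_{0}=\{v\}$ is finite, and if $L_{n}$ is finite then, since each closed simplex is compact and local finiteness forces only finitely many simplices of $K$ to meet a given point's neighborhood, only finitely many simplices of $K$ can meet the finite subcomplex $L_{n}$; therefore $N_{K}(L_{n})=L_{n+1}$ is finite as well.

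The key step is to see that each finite complex $L_{n}$ is systolic, so that Theorem \ref{3.1} yields its collapsibility. Here I would invoke the standard structure theory of systolic complexes: combinatorial balls around a convex subcomplex of a systolic complex are themselves convex, and convex subcomplexes of systolic complexes are in turn systolic (these being among the well known facts of \cite{janusz_2006}). Granting this, every $L_{n}=B_{n}Q$ is a finite systolic complex, hence collapsible by Theorem \ref{3.1}. Combining collapsibility with the monotonicity and exhaustion above exhibits $K=\bigcup_{n}L_{n}$ as a monotone union of collapsible subcomplexes, which is precisely the arborescent structure asserted.

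The main obstacle is the systolicity of the balls $L_{n}$; the remaining ingredients (nesting, exhaustion, finiteness) are essentially formal, and I expect the systolicity to be handled by citation rather than by a fresh argument. As a self-contained alternative that avoids importing this fact, one could instead restrict the gradient matching $\theta_{Q}$ of Section 2 to each finite ball $L_{n}$. The point is a closure property: whenever $\sigma\in L_{n}$, the simplex $\theta_{Q}(\sigma)=y_{Q}(\sigma)\ast\sigma$ again lies in $L_{n}$, because the pointer $y_{Q}(\sigma)$ is a vertex lying no farther from $v$ than the vertices of $\sigma$ already do, so all vertices of $\theta_{Q}(\sigma)$ remain at combinatorial distance $\leq n$ from $v$; this rests on the distance behaviour of directed geodesics recorded in Section 2. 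Since faces of a member of $L_{n}$ also lie in $L_{n}$, the matching restricts to a Morse matching on $L_{n}$ whose unique critical simplex is $v$, and a gradient path in $L_{n}$ is a gradient path in $K$, hence none is closed by Theorem \ref{2.1}; Forman's theorem then shows each finite $L_{n}$ is collapsible without separately proving it systolic. The delicate point in this variant is exactly the closure verification $\theta_{Q}(L_{n})\subseteq L_{n}$.
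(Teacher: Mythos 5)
Your proposal is correct and takes essentially the same route as the paper's own proof: fix a vertex $v$, exhaust $K$ by the combinatorial balls $B_{n}\{v\}$, observe that each such ball is a finite systolic complex (a fact both you and the paper handle by citation to the structure theory of \cite{janusz_2006}), and apply Theorem \ref{3.1} to get collapsibility of each ball. You actually supply more detail than the paper does --- the exhaustion argument via connectedness and Lemma \ref{1.2.1}(1), and the finiteness of each ball via local finiteness --- so your write-up is, if anything, a more complete version of the same argument.
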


\begin{proofpar}\label{3.3}

We fix a vertex $v$ of $K$. For each integer $n$, let $B_{n} := B_{n}(v,K)$ denote the full subcomplex of $K$ generated by the vertices that can be joined to $v$ by an edge-path of length at most $n$. Note that $\{v\}$ is a convex subcomplex of $K$.
Note that for each $n$, $B_{n}$ is a combinatorial ball in a systolic complex. So, since $\{v\}$ is a convex subcomplex in a systolic complex, each such $B_{n}$ is itself a finite systolic complex. Hence for each $n$, $B_{n}$ is, according to Theorem \ref{3.1}, collapsible.
$K$ is therefore the monotone union $\cup_{n=1}^{\infty}B_{n}$  of a sequence of collapsible subcomplexes. Thus $K$ has an arborescent structure.
\qed
\end{proofpar}

\section{The combinatorial structure of locally finite CAT(0) simplicial complexes}

In this section we investigate the combinatorial structure of locally finite simplicial complexes that are CAT(0) for the standard piecewise Euclidean metric.

The section's main result relies on the following theorem proven in \cite{benedetti_2012} (Corollary $3.2.4$).

\begin{theorem}\label{2.1}
Let $K$ be a finite simplicial complex that is a CAT(0) space for the standard piecewise Euclidean metric.
Then $K$ is collapsible.
\end{theorem}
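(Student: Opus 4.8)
The plan is to mirror the discrete Morse theory argument used in the systolic case of Section~2, replacing the combinatorial projection machinery by the metric geodesic structure of the CAT(0) space. The guiding principle is the parallel stressed in the Introduction: where the systolic proof used the uniqueness of directed geodesics and the single-simplex projection of Lemma~\ref{1.2.3}, the CAT(0) proof should use the uniqueness of geodesics together with the fact, recorded in the Preliminaries, that the distance function from a fixed point has a unique local minimum on each convex subset of a CAT(0) space. The target is to produce a Morse matching with exactly one critical cell and then invoke Forman's criterion for collapsibility.

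First I would fix a vertex $v$ and consider the distance function $f(x) = d(v,x)$, which is convex on $|K|$ and attains its unique minimum at $v$. The CAT(0) hypothesis would be used in the form that all vertex stars are convex, which is what lets the geodesic flow toward $v$ interact cleanly with the simplicial structure. For each simplex $\sigma$ other than $v$, I would follow the geodesic from the barycenter $b_{\sigma}$ toward $v$; its initial segment leaves $\mathrm{St}(\sigma, K)$ through a well-defined adjacent simplex, and this determines a pointer vertex $y(\sigma)$, in exact analogy with the pointer function $y_{Q}$ defined via projection rays in the systolic argument.

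Next I would build the matching recursively on dimension, as in Section~2: match each still-unmatched simplex $\sigma$ with $\sigma \ast y(\sigma)$, and leave $v$ as the unique critical cell. To conclude that this yields collapsibility, the essential point is to rule out closed gradient paths. Here I would argue that the geodesic distance of the simplices to $v$ strictly decreases along any gradient path, because each pairing points in the direction of the \emph{unique} geodesic toward $v$, and the uniqueness of that geodesic on the convex target forbids a path from returning to its start. Since $f$ is bounded below by $0$ and minimized only at $v$, every gradient path terminates at $v$, so no path is closed, and the matching is a Morse matching with a single critical simplex.

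The main obstacle is establishing that the pointer function is well defined and that the induced flow is strictly monotone with respect to distance to $v$. In the systolic world this was automatic: projection rays land in single simplices and directed geodesics are unique. In the CAT(0) world one must instead translate the continuous contraction of the ball onto $v$ into a consistent combinatorial matching on the face poset, and it is precisely the convexity of the vertex stars that should make the exit simplex of each geodesic unambiguous and the resulting ordering acyclic. This is the step where the Adiprasito--Benedetti techniques are genuinely more delicate than the systolic argument, and it is where I would expect to concentrate the real work.
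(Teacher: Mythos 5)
The first thing to compare against: the paper does not prove this theorem at all. It is imported verbatim as Corollary $3.2.4$ of Adiprasito and Benedetti \cite{benedetti_2012} and used as a black box to establish the arborescent structure of locally finite CAT(0) complexes. So your proposal is not competing with an internal argument but with a citation, and to succeed it would have to be a genuine self-contained proof. It is not: it is a strategy outline whose decisive steps are explicitly deferred --- you concede this yourself when you write that well-definedness and acyclicity are ``where I would expect to concentrate the real work.'' A proof whose core is postponed is a plan, not a proof.

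Three concrete gaps. First, you silently replace the hypothesis ``the standard piecewise Euclidean metric is CAT(0)'' by ``all vertex stars are convex.'' These are different statements: star-convexity is the hypothesis of Theorem $3.2.1$ in \cite{benedetti_2012}, and deriving it for the standard (equilateral) metric is a separate lemma there --- it is precisely the content that turns their Theorem $3.2.1$ into their Corollary $3.2.4$, i.e.\ into the statement you were asked to prove. Second, your pointer function is not shown to be well defined. In the systolic argument of Section~2, Lemma \ref{1.2.3} guarantees that $\mathrm{St}(\sigma,K)$ meets the smaller ball in a \emph{single} simplex, disjoint from $\sigma$ and spanning a simplex with it; that is exactly what produces a vertex $w$ with $\sigma \ast w$ a simplex of $K$. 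You have no CAT(0) substitute: the geodesic from $b_{\sigma}$ toward $v$ may exit $\mathrm{St}(\sigma,K)$ through a face of any dimension, that face need not span a simplex of $K$ together with $\sigma$, and no rule is given that extracts from it a single vertex $y(\sigma)$ with $\sigma \ast y(\sigma) \in K$ --- nor is it checked that the recursive pairing never places a simplex in two pairs. Third, acyclicity is asserted, not proved: in a gradient path $(\sigma_{0},\tau_{0}), (\sigma_{1},\tau_{1}), \dots$ one has $\sigma_{i+1} \subsetneq \tau_{i}$, so $d(v,\sigma_{i+1}) \geq d(v,\tau_{i})$, but nothing forces $d(v,\cdot)$ to \emph{strictly} decrease from $\sigma_{i}$ to $\sigma_{i+1}$; a closed path could a priori stay at constant distance from $v$, and ruling this out is the delicate induction that occupies most of the relevant part of \cite{benedetti_2012}. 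Since these three points are the entire substance of the theorem, the proposal correctly identifies the strategy but does not constitute a proof.
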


In dimension $2$, however, according to \cite{lazar_2010_8} (see chapter $3$, page $35$), all CAT(0) simplicial coplexes are collapsible, not only those endowed with the standard piecewise Euclidean metric.
Still, our proof for the fact that locally finite CAT(0) simplicial complexes possess an arborescent structure works, even in the $2$-dimensional case, only for simplicial complexes with unitary edge lengths.

\begin{theorem}\label{2.3}
Let $K$ be a locally finite simplicial complex that is a CAT(0) space for the standard piecewise Euclidean metric.
Then $K$ is a monotone union of a sequence of collapsible subcomplexes.
\end{theorem}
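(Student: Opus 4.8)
The plan is to mirror exactly the strategy used for the systolic case in Theorem \ref{3.2}. First I would fix a vertex $v$ of $K$ and, for each integer $n \geq 1$, consider the combinatorial ball $B_{n} := B_{n}(v,K)$, defined as the full subcomplex of $K$ generated by all vertices that can be joined to $v$ by an edge-path of length at most $n$. Since $K$ is locally finite and $B_{n}$ consists of vertices within bounded combinatorial distance of $v$, each $B_{n}$ is a finite subcomplex, and clearly $B_{n} \subseteq B_{n+1}$ with $K = \cup_{n=1}^{\infty} B_{n}$, so the monotone exhaustion is immediate. The substance of the argument is therefore to verify that each $B_{n}$ is collapsible, after which an appeal to Theorem \ref{2.1} (the Adiprasito--Benedetti result that a finite CAT(0) complex for the standard piecewise Euclidean metric is collapsible) would finish the proof.

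The key step, and the main obstacle, is showing that each finite ball $B_{n}$ is itself a CAT(0) space for the standard piecewise Euclidean metric, so that Theorem \ref{2.1} applies to it. In the systolic case this was painless because the property of being systolic is inherited by combinatorial balls around a convex subcomplex (Lemma \ref{1.2.1} and the attendant projection machinery), so a ball in a systolic complex is again systolic and hence collapsible. In the CAT(0) setting one cannot simply restrict the ambient metric, because the intrinsic path metric on $|B_{n}|$ differs from the induced metric: geodesics of $K$ between two points of $B_{n}$ may leave $B_{n}$, and conversely $|B_{n}|$ with its own intrinsic standard piecewise Euclidean metric is what one must show is CAT(0). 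The cleanest route is to invoke convexity: a full convex subcomplex of a CAT(0) space, with the induced length metric, is again CAT(0). I would argue that $B_{n}$ is geodesically convex in $|K|$, using that balls in a CAT(0) space are convex (the distance function from $v$ is convex along geodesics, as recorded in the Preliminaries), and that for a complex with unitary edge lengths the combinatorial ball $B_{n}$ coincides with the full subcomplex carried by a suitable metric ball around $v$.

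Here is where the restriction to unitary edge lengths, flagged in the text immediately before the statement, becomes essential. With all simplices isometric to regular Euclidean simplices of side $1$, the combinatorial distance $d_{c}$ and the metric distance $d$ are comparable in a way that lets one identify the combinatorial ball $B_{n}$ with the metric notion and transfer convexity; without this uniformity one loses control of which simplices a metric ball meets, and the identification breaks down. The delicate point is thus not the exhaustion but certifying that the induced structure on $B_{n}$ is genuinely CAT(0) with its standard piecewise Euclidean metric, rather than merely a convex subset of a CAT(0) space in the metric sense. Once $B_{n}$ is seen to be a finite CAT(0) simplicial complex for the standard piecewise Euclidean metric, Theorem \ref{2.1} gives collapsibility of each $B_{n}$, and $K = \cup_{n=1}^{\infty} B_{n}$ is the desired monotone union of collapsible subcomplexes, so $K$ has an arborescent structure.

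\begin{proofpar}
We fix a vertex $v$ of $K$. For each integer $n$, let $B_{n} := B_{n}(v,K)$ denote the full subcomplex of $K$ generated by the vertices that can be joined to $v$ by an edge-path of length at most $n$. Since $K$ is locally finite, each $B_{n}$ is a finite subcomplex of $K$, and $B_{n} \subseteq B_{n+1}$ with $K = \cup_{n=1}^{\infty} B_{n}$. Because the edge lengths are unitary, the combinatorial ball $B_{n}$ is a convex subcomplex of $K$, so $|B_{n}|$ endowed with the standard piecewise Euclidean metric is a finite CAT(0) space. Hence by Theorem \ref{2.1} each $B_{n}$ is collapsible. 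Therefore $K$ is the monotone union $\cup_{n=1}^{\infty} B_{n}$ of a sequence of collapsible subcomplexes, and thus $K$ has an arborescent structure.
\qed
\end{proofpar}
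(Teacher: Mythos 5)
Your scaffolding matches the paper's proof exactly: fix a vertex $v$, exhaust $K$ by the combinatorial balls $B_{n}$, argue that each $B_{n}$ is a finite complex that is CAT(0) for the standard piecewise Euclidean metric, and conclude via Theorem \ref{2.1} (the Adiprasito--Benedetti collapsibility result). The gap is in the one step that carries all the weight: your certification that $B_{n}$ is CAT(0). You claim that, because edge lengths are unitary, the combinatorial ball $B_{n}$ coincides with the full subcomplex carried by a suitable metric ball around $v$, and you then import the convexity of metric balls in CAT(0) spaces. This identification is false, already in the simplest example. Take the equilateral triangulation of the flat plane $\mathds{R}^{2}$ (a locally finite complex that is CAT(0) for the standard piecewise Euclidean metric), with $v$ the origin and lattice basis vectors $e_{1}, e_{2}$ at angle $\pi/3$. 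The vertex $4e_{1}+4e_{2}$ has combinatorial distance $8$ from $v$ but Euclidean distance $4\sqrt{3} \approx 6.93$, while the corner vertex $7e_{1}$ of $B_{7}$ has Euclidean distance exactly $7$. Any metric ball containing the corners of $B_{7}$ must have radius at least $7$ and therefore already contains $4e_{1}+4e_{2} \notin B_{7}$; no choice of radius makes the vertex sets agree. So convexity of metric balls cannot be transferred to the combinatorial balls, and your assertion that $B_{n}$ is a convex subcomplex is unsupported. (In this flat example $B_{n}$ happens to be convex --- it is a hexagon --- but that needs a different argument, and whether combinatorial balls in general CAT(0) piecewise Euclidean complexes are convex is far from clear; the convexity hypotheses on vertex stars in Adiprasito--Benedetti's general theorem reflect precisely this difficulty.)

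The paper avoids convexity altogether. Its proof argues that $B_{n}$ is a ball in a CAT(0) space, hence contractible and in particular simply connected; that $B_{n}$, sitting inside the locally CAT(0) space $K$, is itself locally CAT(0); and then applies the Cartan--Hadamard-type statement quoted in the preliminaries (a complete, connected, simply connected space of curvature $\leq 0$ is CAT(0)) to conclude that $B_{n}$ is CAT(0), after which Theorem \ref{2.1} finishes as in your proposal. If you want to keep your route, you would need a direct proof that combinatorial balls --- not metric balls --- are convex in CAT(0) standard piecewise Euclidean complexes, which is a substantially stronger claim than anything you invoke; the simple-connectedness-plus-local-curvature argument is the one the paper intends, and it sidesteps the metric/combinatorial radius comparison on which your argument founders.
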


\begin{proofpar}\label{2.4}
We fix a vertex $v$ of $K$. For each integer $n$, let $B_{n}$ be the full subcomplex of $K$ generated by the vertices that can be joined to $v$ by an edge-path of length at most $n$. Note that for each $n$, $B_{n}$ is a ball in a CAT(0) space. It is therefore contractible and in particular it is simply connected.

Furthermore, because $K$ is a CAT(0) space, it is locally a CAT(0) space. For each $n > 0$, $B_{n}$ is therefore itself  locally a CAT(0) space. Because each $B_{n}, n > 0$ is simply connected and locally a CAT(0) space, we may conclude that each such ball is a CAT(0) space.  Hence for each $n$, $B_{n}$ is a finite simplicial complex that is a CAT(0) space for the standard piecewise Euclidean metric. So, according to Theorem \ref{2.1}, each such $B_{n}$ is collapsible.
$K$ is therefore the monotone union $\cup_{n=1}^{\infty}B_{n}$ of a sequence of collapsible subcomplexes. This ensures that $K$ has an arborescent structure.
\qed
\end{proofpar}

\section{Final remarks}

The first collapsibility criterion involving CAT(0) metrics was given in $2008$ by
Crowley for complexes of dimension at most $3$ endowed with standard piecewise Euclidean metrics. Her result was extended to all dimensions by Adiprasito and Benedetti in \cite{benedetti_2012} both for simplicial and polyhedral complexes. Both results require additional hypotheses and follow by applying discrete Morse theory.

Another method for showing the collapsibility of CAT(0) simplicial complexes is given in dimension $2$ in \cite{lazar_2010_8} (see chapter $3.1$, page $36$). This second method uses the definition of an elemenatry collapse and it relies on the fact that, according to White (see \cite{white_1967}), strongly convex simplicial $2$-complexes have a simplex with a free face. In higher dimensions, however, it is still an open question whether the same method ensures the collapsibility of CAT(0) complexes. We assume, however, that White's result can be extended to arbitrary dimension: finite strongly convex simplicial complexes should also have a facet with a free face. Once this problem is solved, we will have to find the new geodesic segments in the subcomplex obtained by performing an elementary collapse on the CAT(0) complex. In  dimension $2$ there exists a unique geodesic segment joining any two points in the subcomplex (see \cite{lazar_2010_8}, chapter $3.1$, page $36$). In higher dimensions, however, this is no longer true for any two points in the complex. Namely, the problem is with those points joined in the CAT(0) complex by a unique segment passing through the simplex one deletes. This segment is no longer necessarily unique in the subcomplex obtained by performing the elementary collapse. We hope the method works at least for CAT(0) complexes endowed with standard piecewise Euclidean metrics. By fixing a vertex of the complex, considering combinatorial balls around the fixed vertex and performing as many elementary collapses on the complex as necessary, we should reach another ball of smaller radius around the same vertex and recover the CAT(0) metric the complex was initially endowed with. Note that each such ball is a CAT(0) space. Once we have recovered the CAT(0) metric, since the strongly convex metric on the subcomplex ensures the existence of the simplex with the free face, we may perform another elementary collapse.

Another notion related to collapsibility is shellability (see \cite{ziegler_1998}). We call a pure $n$-dimensional simplicial complex $K$ shellable if it is the union of a shellable $n-$complex $K_{1}$ and an $n$-simplex $K_{2}$ such that $K_{1} \cap K_{2}$ is a shellable $(n-1)-$complex. Intuitively, shellable complexes can be assembled one $n$-face at a time. This induces a total order called shelling order on the facets of the complex. Since one may collapse away all top-dimensional faces in the reverse of the shelling order, shellable contractible complexes are collapsible.
Systolic complexes being both contractible and collapsible, one may naturally ask whether they are also shellable.
It turns out they are not. This is the case since, according to Zeeman, every shellable $n$-dimensional pseudomanifold is either an $n$-ball or a $n$-sphere. A pseudomanifold is a pure $n$-complex in which every $(n-1)$-face belongs to either one or two $n$-faces, but not more. Every triangulation of a manifold is a pseudomanifold but not the other way round.
No triangulation of a $2$-sphere, however, is $6$-large and hence no triangulation of a manifold of dimension at least $3$ is systolic (because such manifold has $2$-spherical links). One may hence conclude that systolic complexes are not shellable.

\end{document}